\documentclass[11pt,bezier]{article}
\usepackage{amsmath, amssymb, amsfonts, graphicx}
\usepackage{pgf,tikz}\usepackage{mathrsfs}

\textwidth = 16 cm \textheight = 23 cm \oddsidemargin =2.5 mm
\evensidemargin = 0 cm \topmargin = -1.5 cm
\parskip = 1.5 mm


\newtheorem{prethm}{{\bf Theorem}}

\newenvironment{thm}{\begin{prethm}\sl{\hspace{-0.5
               em}{\bf.}}}{\end{prethm}}

\newtheorem{prepro}[prethm]{{\bf Proposition}}

\newtheorem{prelem}[prethm]{{\bf Lemma}}

\newenvironment{lem}{\begin{prelem}\sl{\hspace{-0.5
               em}{\bf.}}}{\end{prelem}}

\newtheorem{predeff}[prethm]{{\bf Definition}}

\newtheorem{precor}[prethm]{{\bf Corollary}}

\newtheorem{preconj}[prethm]{{\bf Conjecture}}

\newtheorem{preremark}[prethm]{{\bf Remark}}

\newenvironment{remark}{\begin{preremark}\rm{\hspace{-0.5
               em}{\bf.}}}{\end{preremark}}

\newtheorem{preexample}[prethm]{{\bf Example}}

\newtheorem{preproof}{{\bf\textsf{Proof.}}}

\newenvironment{proof}[1]{\begin{preproof}{\rm
               #1}\hfill{$\Box$}}{\end{preproof}}

\newcommand{\la}{\lambda}

\newcommand{\mul}{{\rm mult}}

\newcommand{\nsg}{{\rm NSG}}

\title{Eigenvalue-free interval for threshold graphs}

\author{{\sc Ebrahim Ghorbani} \\[.3cm]
{\sl Department of Mathematics, K. N. Toosi University of Technology,}\\
{\sl P. O. Box 16765-3381, Tehran, Iran}\\
{\sl School of Mathematics, Institute for Research in Fundamental
Sciences (IPM),}\\
{\sl P. O. Box 19395-5746, Tehran, Iran}
\\[.3cm]
$\mathsf{e\_ghorbani@ipm.ir}$}


\begin{document}
\maketitle

\vspace{5mm}

\begin{abstract}
This paper deals with the eigenvalues of the adjacency matrices of threshold graphs for which $-1$ and $0$ are considered as trivial eigenvalues.
We show that threshold  graphs have no non-trivial eigenvalues in the interval $\left[(-1-\sqrt{2})/2,\,(-1+\sqrt{2})/2\right]$.
This confirms a conjecture by Aguilar,  Lee,  Piato, and  Schweitzer (2018).

\vspace{5mm}
\noindent {\bf Keywords:} Threshold graph,  Eigenvalue,  Anti-regular graph  \\[.1cm]
\noindent {\bf AMS Mathematics Subject Classification\,(2010):}   05C50, 05C75
\end{abstract}

\vspace{5mm}

\section{Introduction}
A threshold graph is a graph that can be constructed from a one-vertex graph by repeated addition of a single isolated vertex to the graph, or
addition of a single vertex that is adjacent to all other vertices.
An equivalent definition is the following: a graph is a threshold graph if there are a real number $S$ and for each vertex $v$ a real weight $w(v)$ such that two vertices $u,v$ are adjacent  if and only if $w(u)+w(v)> S$.
This justifies the name ``threshold graph" as $S$ is the threshold for  being adjacent.
Threshold graphs also can  be defined in terms of forbidden subgraphs, namely they are  $\{P_4,2K_2,C_4\}$-free
graphs.  Note, if a threshold graph is
not connected then (since $2K_2$ is forbidden) at most one of its
components is non-trivial (others are trivial, i.e. isolated
vertices). For more information on properties of threshold graphs and related family of graphs see \cite{bls,mp}.

In this paper we deal with  eigenvalues of the  adjacency matrices of threshold graphs for which $-1$ and $0$ are considered as trivial eigenvalues.
In \cite{jtt0} it was shown that threshold graphs have no eigenvalues in $(-1, 0)$.
This result was extended in \cite{gh} by showing that a graph $G$ is a cograph  (i.e. a $P_4$-free graph) if and only if no induced subgraph of $G$ has an eigenvalue in the interval $(-1,0)$. A distinguished subclass of threshold graphs is the family of {\em anti-regular graphs} which are the graphs with only two vertices of equal degrees.  If $G$ is anti-regular it follows easily that the complement graph $\overline{G}$ is also anti-regular.  Up to isomorphism, there is only one connected anti-regular graph on $n$ vertices  and  its complement is the unique disconnected $n$-vertex anti-regular graph \cite{bc}.

In this paper we show that  threshold  graphs have no non-trivial eigenvalues in the interval $\left[(-1-\sqrt{2})/2,\,(-1+\sqrt{2})/2\right]$.
This result confirms a conjecture of \cite{alps} and improve the aforementioned result of \cite{jtt0}.
We remark that another related conjecture was posed in \cite{alps}  that for any $n$, the anti-regular graph with $n$ vertices has the smallest positive eigenvalue
and has the largest non-trivial negative eigenvalue among all threshold graphs on $n$ vertices.
Partial results on this conjecture is given in \cite{afss}.

\section{Preliminaries}\label{pre}

In this section we introduce the notations and recall  a basic fact  which will be used in the sequel.
The graphs we consider are all simple and undirected.
For a  graph $G$, we denote  by $V(G)$ the vertex set of $G$.
For two vertices $u,v$, by $u\sim v$ we mean $u$ and $v$ are adjacent.
 If  $V(G)=\{v_1, \ldots , v_n\}$, then the {\em adjacency matrix} of $G$ is an $n \times  n$
 matrix $A(G)$ whose $(i, j)$-entry is $1$ if $v_i\sim v_j$ and  $0$ otherwise.
 By {\em eigenvalues} of $G$ we mean those of $A(G)$.
 The multiplicity of an eigenvalue $\la$ of $G$ is denoted by $\mul(\la,G)$.
For a vertex $v$ of $G$, let $N_G(v)$ denote the {\em open neighborhood} of $v$, i.e.   the set of
vertices of $G$ adjacent to $v$ and $N_G[v]=N_G(v)\cup\{v\}$ denote the {\em closed neighborhood} of $v$; we will drop
the subscript $G$  when it is clear from the context.
Two vertices $u$ and $v$ of $G$ are called {\em duplicates} if $N(u)=N(v)$
and called {\em coduplicates} if $N[u]=N[v]$. Note that duplicate vertices cannot be
adjacent while coduplicate vertices must be adjacent.
A subset $S$ of  $V(G)$   such that $N(u)=N(v)$
for any $u, v\in  S$   is called  a  {\em duplication  class} of $G$.  Coduplication  classes  are defined analogously.

We will make use of the interlacing property of graph eigenvalues which we recall below (see \cite[Theorem~2.5.1]{bh}).
\begin{lem}\label{inter}
Let $G$ be a graph of order $n$, $H$ be an induced subgraph of $G$ of order $m$, $\la_1\ge\cdots\ge\la_n$ and $\mu_1\ge\cdots\ge\mu_m$ be the eigenvalues of $G$ and $H$, respectively. Then $$\la_i\ge\mu_i\ge\la_{n-m+i}~~\hbox{for}~ i=1, \ldots,m.$$
In particular, if $m=n-1$, then
 $$\la_1\ge\mu_1\ge\la_2\ge\mu_2\ge\cdots\ge\la_{n-1}\ge\mu_{n-1}\ge\la_n.$$
\end{lem}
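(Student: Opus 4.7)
The plan is to invoke the Courant--Fischer minimax characterization of eigenvalues of a real symmetric matrix, applied to $A(G)$ and its principal submatrix $A(H)$. The crucial observation is that since $H$ is an \emph{induced} subgraph of $G$, the matrix $A(H)$ is obtained from $A(G)$ by deleting the rows and columns indexed by the vertices in $V(G)\setminus V(H)$, so $A(H)$ is a genuine principal $m\times m$ submatrix. Letting $W\subset\mathbb{R}^n$ denote the $m$-dimensional coordinate subspace spanned by the standard basis vectors associated with $V(H)$, zero-padding provides an isometric embedding $\iota:\mathbb{R}^m\hookrightarrow W$ that preserves Rayleigh quotients, that is, $\iota(y)^{T} A(G)\,\iota(y) = y^{T} A(H)\, y$ for every $y\in\mathbb{R}^m$.

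To get $\lambda_i\geqslant \mu_i$, I would use the ``max-min'' form
$$\lambda_i = \max_{\dim U = i}\;\min_{0\neq v\in U}\frac{v^{T}A(G)v}{v^{T}v},$$
and the analogous formula for $\mu_i$. Given an $i$-dimensional $U'\subseteq\mathbb{R}^m$ attaining $\mu_i$, its image $\iota(U')$ is an $i$-dimensional subspace of $\mathbb{R}^n$ on which the Rayleigh quotient of $A(G)$ agrees with that of $A(H)$ on $U'$. Hence $\iota(U')$ is admissible in the maximization defining $\lambda_i$, and the minima coincide, yielding $\lambda_i\geqslant \mu_i$.

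To get $\mu_i\geqslant \lambda_{n-m+i}$, I would invoke the dual ``min-max'' formulation
$$\lambda_{n-m+i} = \min_{\dim V = m-i+1}\;\max_{0\neq v\in V}\frac{v^{T}A(G)v}{v^{T}v},$$
since $n-(n-m+i)+1 = m-i+1$, and similarly $\mu_i = \min_{\dim V'=m-i+1}\max_{0\neq y\in V'}\frac{y^{T}A(H)y}{y^{T}y}$. Embedding an optimal $(m-i+1)$-dimensional $V'\subseteq\mathbb{R}^m$ via $\iota$ produces an admissible subspace for the minimization defining $\lambda_{n-m+i}$ on which the Rayleigh quotients match, giving $\lambda_{n-m+i}\leqslant \mu_i$. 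Specializing to $m=n-1$ turns $\mu_i\geqslant \lambda_{n-m+i}=\lambda_{i+1}$ into the stated interlacing chain.

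The main concern is bookkeeping with the indices $i$, $m$, $n$, and the co-dimensions $m-i+1$ in the dual formulation; besides this, the result is classical (Cauchy's interlacing theorem) and the only substantive point is the identification of $A(H)$ as a principal submatrix, which uses crucially that $H$ is induced rather than an arbitrary subgraph. No deeper obstacle is expected, as the proof is essentially a verification once the minimax machinery is in place.
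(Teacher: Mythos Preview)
Your argument via the Courant--Fischer minimax principle is correct and is the standard route to Cauchy's interlacing theorem; the index bookkeeping you flag checks out, and the identification of $A(H)$ as a principal submatrix is indeed the only place where ``induced'' is used. Note, however, that the paper does not supply its own proof of this lemma: it is quoted as a known result with a reference to \cite[Theorem~2.5.1]{bh}, so there is no in-paper argument to compare against. Your write-up would serve perfectly well as a self-contained proof should one be desired.
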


\section{Eigenvalue-free interval for threshold graphs}

In this section we present the main result of the paper. We start by the following remark on the structure of threshold graphs.

\begin{remark}\label{thrstruc}  As it was observed in \cite{ma} (see also \cite{as,ha}),
the vertices of any connected threshold graph $G$ can be partitioned into $h$ non-empty coduplication classes $V_1,\ldots, V_h$ and $h$ non-empty duplication classes $U_1,\ldots, U_h$  such that the vertices in $V_1\cup\cdots\cup V_h$ form a clique and
$$N(u)=V_1\cup\cdots\cup V_i~~\hbox{for any}~ u\in U_i,~1\le i\le h.$$
(It turns out that  $U_1\cup\cdots\cup U_h$ form a coclique.)
Accordingly, a connected threshold graph is also called {\em nested
split graph} (or NSG for short). If $m_i = |U_i|$ and $n_i =|V_i|$ for $1\le i\le h$,
then we write
$$G =\nsg(m_1,\ldots , m_h; n_1, \ldots, n_h).$$
For an illustration of this structure with $h=5$, see Figure~\ref{figthr}.
It follows that a threshold graph $G$ of order $n$ is anti-regular if and only if $n_1=\cdots=n_h=m_1=\cdots=m_h=1$ (in case $n$ is even) or
$n_1=\cdots=n_h=m_1=\cdots=m_{h-1}=1$ and $m_h=2$ (in case $n$ is odd).
\end{remark}

\begin{figure}\centering
\begin{tikzpicture}
\draw [line width=1.75pt] (-4,-3)-- (-4,-7);
\draw [line width=1.75pt] (-4,-3)-- (-3.4,-4.19);
\draw [line width=1.75pt] (-3.4,-4.19)-- (-3.38,-5.9);
\draw [line width=1.75pt] (-3.38,-5.9)-- (-4,-7);
\draw [line width=1.75pt] (-3.4,-4.19)-- (-4,-7);
\draw [line width=1.75pt] (-4,-3)-- (-4.28,-4.91);
\draw [line width=1.75pt] (-4,-3)-- (-3.38,-5.9);
\draw [line width=1.75pt] (-4.28,-4.91)-- (-4,-7);
\draw [line width=1.75pt] (-4.28,-4.91)-- (-3.4,-4.19);
\draw [line width=1.75pt] (-4.28,-4.91)-- (-3.38,-5.9);
\draw [line width=1.75pt] (-7,-3)-- (-4,-3);
\draw [line width=1.75pt] (-7,-4)-- (-4,-3);
\draw [line width=1.75pt] (-7,-4)-- (-3.4,-4.19);
\draw [line width=1.75pt] (-7,-5)-- (-4,-3);
\draw [line width=1.75pt] (-7,-5)-- (-3.4,-4.19);
\draw [line width=1.75pt] (-7,-5)-- (-4.28,-4.91);
\draw [line width=1.75pt] (-7,-6)-- (-4,-3);
\draw [line width=1.75pt] (-7,-6) to [out=35,in=-160] (-3.4,-4.19);
\draw [line width=1.75pt] (-7,-6)-- (-4.28,-4.91);
\draw [line width=1.75pt] (-7,-6)-- (-3.38,-5.9);
\draw [line width=1.75pt] (-7,-7)-- (-4,-3);
\draw [line width=1.75pt] (-7,-7)-- (-4.28,-4.91);
\draw [line width=1.75pt] (-7,-7)-- (-3.38,-5.9);
\draw [line width=1.75pt] (-7,-7)-- (-4,-7);
\draw[line width=1.75pt] (-7,-7) to [out=30,in=-120] (-3.4,-4.19);
\draw [fill=black] (-4,-3) circle (5.5pt);
\draw[color=black] (-3.6,-2.8) node {$V_1$};
\draw [fill=black] (-3.4,-4.19) circle (5.5pt);
\draw[color=black] (-3.2,-3.8) node {$V_2$};
\draw [fill=black] (-4.28,-4.91) circle (5.5pt);
\draw[color=black] (-4.5,-5.3) node {$V_3$};
\draw [fill=black] (-3.38,-5.9) circle (5.5pt);
\draw[color=black] (-2.9,-6.1) node {$V_4$};
\draw [fill=black] (-4,-7) circle (5.5pt);
\draw[color=black] (-3.5,-7.1) node {$V_5$};
\draw(-7,-4) circle (5.5pt);
\draw(-7,-3) circle (5.5pt);
\draw (-7,-5) circle (5.5pt);
\draw (-7,-6) circle (5.5pt);
\draw (-7,-7) circle (5.5pt);
\draw (-7.5,-3) node {$U_1$};
\draw (-7.5,-4) node {$U_2$};
\draw (-7.5,-5) node {$U_3$};
\draw (-7.5,-6) node {$U_4$};
\draw (-7.5,-7) node {$U_5$};
\end{tikzpicture}
\caption{A threshold graph: $V_i$'s are cliques, $U_i$'s are cocliques, each thick line indicates the edge set of a complete bipartite subgraph on some $U_i,V_j$ }\label{figthr}
\end{figure}
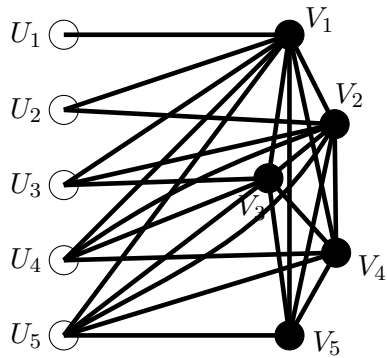

In any graph $G$  if we add a new vertex duplicate (coduplicate) to $u\in V(G)$, then the multiplicity of $0$ (of $-1$) increases by 1.
That's why the eigenvalues 0 and $-1$ are treated as trivial eigenvalues in threshold graphs. The following lemma can be deduced in a similar manner.

\begin{lem}\label{threshmult} Let $G=\nsg(m_1,\ldots,m_h;n_1,\ldots,n_h)$ be a connected threshold graphs. Then
\begin{align*}
\mul(0,G)&=\sum_{i=1}^h(m_i-1),\\
\mul(-1,G)&=\sum_{i=1}^h(n_i-1)+\left\{\begin{array}{ll}1&\hbox{if $m_h=1$},\\0&\hbox{if $m_h\ge2$.}\end{array}\right.
\end{align*}
\end{lem}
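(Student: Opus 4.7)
The strategy is to establish each identity by first exhibiting enough explicit eigenvectors to achieve the stated lower bound, and then ruling out any additional contributions via a direct analysis of the eigenvalue equation $Af=\la f$.

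For the lower bound I would use the duplication/coduplication structure directly. Within each $U_i$, any two vertices $u,u'$ satisfy $N(u)=N(u')$, so the corresponding columns of $A$ are identical; hence $e_u-e_{u'}\in\ker A$, and choosing $m_i-1$ such differences per class yields $\sum_i(m_i-1)$ linearly independent null vectors. Symmetrically, $N[v]=N[v']$ for $v,v'\in V_i$ gives $(A+I)(e_v-e_{v'})=0$, producing $\sum_i(n_i-1)$ independent $-1$-eigenvectors. When $m_h=1$ I would additionally observe that the unique $w\in U_h$ satisfies $N[w]=V_1\cup\cdots\cup V_h\cup\{w\}=N[v]$ for every $v\in V_h$, which effectively merges $U_h$ with $V_h$ into one coduplication class and contributes one extra independent $-1$-eigenvector, explaining the correction term.

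For the upper bound I would analyze $Af=\la f$ vertex by vertex. The equation at $u\in U_i$ reads $\sum_{v\in V_1\cup\cdots\cup V_i}f(v)=\la f(u)$, and since the left-hand side depends only on $i$, for $\la=-1$ this immediately forces $f$ to be constant on each $U_i$. Rewriting the equation at $v\in V_j$ using that $V$ is a clique gives $\sum_{v'\in V}f(v')+\sum_{i\ge j}\sum_{u\in U_i}f(u)=(\la+1)f(v)$, whose left-hand side depends only on $j$; so for $\la=0$ this forces $f$ to be constant on each $V_j$. Subtracting the equation for $i$ from that for $i+1$ then telescopes the remaining constraints down to $n_i a_i=0$ (when $\la=0$) or $m_i b_i=0$ for $i<h$ (when $\la=-1$), where $a_i,b_i$ denote the class-values of $f$ on $V_i$ and $U_i$. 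This eliminates every class-value except possibly at $i=h$.

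The delicate step is the closing of the chain at $i=h$, since this is where the asymmetry between $\la=0$ and $\la=-1$ surfaces. In the case $\la=0$ the last difference is $n_h a_h=0$, killing all class-values of $f$ on $V$; a quick count of the remaining parameters on $U$ recovers $\sum_i(m_i-1)$. In the case $\la=-1$ the closing reduces, via the telescoping identity $\sum_i t_i=-b_h$ with $t_i=\sum_{v\in V_i}f(v)$, to the single scalar equation $(m_h-1)b_h=0$: if $m_h\ge2$ this forces $b_h=0$ and one recovers $\sum_i(n_i-1)$, but if $m_h=1$ it leaves $b_h$ free and produces exactly one extra degree of freedom, matching the ``$+1$''. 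The main obstacle is organizing this bookkeeping cleanly while keeping track of both halves of the partition simultaneously; once the pivot $(m_h-1)b_h=0$ is isolated, the claimed formula drops out.
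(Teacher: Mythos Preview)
Your argument is correct. The paper does not actually give a proof of this lemma: it only remarks, just before the statement, that adding a duplicate (respectively coduplicate) vertex to any graph increases $\mul(0,\cdot)$ (respectively $\mul(-1,\cdot)$) by one, and then asserts that the lemma ``can be deduced in a similar manner.'' The intended argument is therefore inductive on the class sizes: shrinking each $U_i$ and $V_i$ down to a single vertex reduces to a minimal configuration, and every vertex removed along the way lowers the appropriate multiplicity by exactly one, so the formula follows once the base case is checked. Your route is genuinely different: you compute the $0$- and $(-1)$-eigenspaces directly, first by exhibiting the difference vectors supported on (co)duplication classes, and then by solving $Af=\la f$ blockwise and telescoping to isolate the pivotal relation $(m_h-1)b_h=0$. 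This is more work than the paper's one-line induction, but it is fully self-contained (no base case to verify) and actually produces explicit bases for the eigenspaces, which the inductive approach does not. One small point to tighten in your $\la=0$ upper bound: after you force $f\equiv0$ on $V$, you still need to invoke the equations at the vertices $v\in V_j$, which now read $B_j=0$, to impose $\sum_{u\in U_j}f(u)=0$ on each cocliquish class; without that step the ``quick count of the remaining parameters on $U$'' would give $\sum_i m_i$ rather than $\sum_i(m_i-1)$.
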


\begin{lem}\label{G-v} Let $G$ be threshold graph which is not an anti-regular graph. Then there is some vertex $v$ of $G$ such that for $H=G-v$ we have either
\begin{itemize}
  \item[\rm(i)] $\mul(0,G)=\mul(0,H)+1$, $\mul(-1,G)=\mul(-1,H)$; or
  \item[\rm(ii)] $\mul(0,G)=\mul(0,H)$, $\mul(-1,G)=\mul(-1,H)+1$.
\end{itemize}
\end{lem}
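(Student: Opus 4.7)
The plan is to exploit the nested split graph (NSG) structure of Remark~\ref{thrstruc} together with the explicit multiplicity formulas in Lemma~\ref{threshmult}. Heuristically, deleting a vertex from a duplication class $U_j$ should drop $\mul(0,G)$ by one, while deleting a vertex from a coduplication class $V_j$ should drop $\mul(-1,G)$ by one; the task is to pick such a vertex without disturbing the other multiplicity.

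Assume first that $G$ is connected and write $G=\nsg(m_1,\dots,m_h;n_1,\dots,n_h)$. The characterization at the end of Remark~\ref{thrstruc} shows that if $G$ is not anti-regular, then at least one of the following holds: (a) some $n_j\ge 2$, (b) some $m_j\ge 2$ with $j<h$, or (c) $m_h\ge 3$. In case (a), choose $v\in V_j$; then $H=G-v$ is again an NSG with the same parameters except that $n_j$ is replaced by $n_j-1\ge 1$, so the structural description is preserved. Applying Lemma~\ref{threshmult} to $G$ and $H$ then gives $\mul(0,H)=\mul(0,G)$ and $\mul(-1,H)=\mul(-1,G)-1$, which is alternative~(ii). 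In cases (b) and (c), choose $v\in U_j$; only $m_j$ decreases by one. In case (b) the index $h$ is untouched, and in case (c) the new $m_h$ is still at least $2$, so in either situation the indicator $[m_h=1]$ in the formula of Lemma~\ref{threshmult} is unchanged, and we obtain $\mul(0,H)=\mul(0,G)-1$ and $\mul(-1,H)=\mul(-1,G)$, which is alternative~(i).

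The main subtlety — and the reason for the three-case split — is precisely the indicator $[m_h=1]$ in the formula for $\mul(-1,G)$: removing a vertex from $U_h$ when $m_h=2$ would flip this indicator and ruin both counts simultaneously. The hypothesis that $G$ is not anti-regular is exactly what guarantees that one of (a), (b), (c) holds, letting us always avoid this single bad move.

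For the disconnected case, $G$ has one non-trivial component $G_0$ together with $k\ge 1$ isolated vertices, and the spectra satisfy $\mul(0,G)=\mul(0,G_0)+k$ and $\mul(-1,G)=\mul(-1,G_0)$. If $k\ge 2$, deleting one isolated vertex already yields case~(i) for $G$. If $k=1$, then $G_0$ cannot be the connected anti-regular graph (otherwise $G$ would be the disconnected anti-regular graph, contradicting the hypothesis), so the connected argument applied to $G_0$ produces a vertex that works for $G$ as well.
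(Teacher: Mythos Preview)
Your argument is correct and follows essentially the same route as the paper: the same three-case split (some $n_j\ge2$, some $m_j\ge2$ with $j<h$, or $m_h\ge3$) combined with Lemma~\ref{threshmult}, with the key observation that the indicator $[m_h=1]$ is never disturbed. Your treatment is slightly more complete in that you also dispose of the disconnected case, which the paper's proof tacitly omits (it is only needed after the reduction to connected $G$ in the proof of Theorem~\ref{main}).
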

\begin{proof}{Let $G=\nsg(m_1,\ldots,m_h;n_1,\ldots,n_h)$.
First assume that $n_1=\cdots=n_h=m_1=\cdots=m_{h-1}=1$. As $G$ is not an anti-regular graph, we have $m_h\ge3$.
Let $v\in U_h$ and $H=G-v$. Then $H=\nsg(m_1,\ldots,m_{h-1},m_h-1;n_1,\ldots,n_h)$. So by Lemma~\ref{threshmult},
$$\mul(0,G)=2=\mul(0,H)+1,~\mul(-1,G)=0=\mul(-1,H),$$
and so we are done. So we may assume that $n_k\ge2$ for some $1\le k\le h$ or $m_j\ge2$ for some $1\le j\le h-1$.
If $n_k\ge2$, let $v\in V_k$ and $H=G-v$. Then
$$H=\nsg(m_1,\ldots,m_h;n_1,\ldots,n_{k-1},n_k-1,n_{k+1},\ldots,n_h).$$
So by Lemma~\ref{threshmult},
$$\mul(0,G)=\mul(0,H),~\mul(-1,G)=\mul(-1,H)+1.$$
If $m_j\ge2$ for some $1\le j\le h-1$, the result follows similarly.
}\end{proof}
By $\eta_+(G)$ we denote the smallest positive eigenvalue of $G$ and by $\eta_-(G)$ we denote the largest eigenvalue of $G$ less than $-1$.
The unique connected anti-regular graph on $n\geq 2$ vertices is denoted by $A_n$.

\begin{lem}\label{eta} {\rm(\cite{alps})} For anti-regular graphs we have
 $$\eta_-(A_n)<(-1-\sqrt2)/2,~~\hbox{and}~~~(-1+\sqrt2)/2<\eta_+(A_n).$$
 \end{lem}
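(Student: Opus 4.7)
My approach is to analyze the characteristic polynomial $\phi_n(x) := \det(xI - A(A_n))$ at the two critical values $\alpha_\pm := (-1\pm\sqrt{2})/2$, which are characterized by the identity $4\alpha_\pm^2 + 4\alpha_\pm - 1 = 0$, equivalently $\alpha_\pm(\alpha_\pm + 1) = 1/4$.

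First I would set up a recursion for $\phi_n(x)$. The anti-regular graph $A_n$ is built from $A_{n-2}$ by attaching, at one end, a new coduplicate of a universal-type vertex and, at the other end, a new duplicate of a degree-one vertex. Using the standard formulas for how the characteristic polynomial changes under duplication (which adds $0$ as a root) and coduplication (which adds $-1$ as a root), together with cofactor expansion along the two new vertices, I expect a second-order linear recursion of the shape
\begin{equation*}
\phi_n(x) \;=\; p(x)\,\phi_{n-2}(x) \;+\; q(x)\,\phi_{n-4}(x),
\end{equation*}
with low-degree coefficients $p(x),q(x)$. Substituting $x = \alpha_+$ and repeatedly reducing powers of $\alpha_+$ via $\alpha_+^2 = (1-4\alpha_+)/4$ should collapse the recursion to a scalar one that admits a closed form (or at least a nonzero, sign-definite solution) after checking the base cases $A_2, A_3, A_4, A_5$ by direct computation. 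The same argument runs verbatim at $\alpha_-$.

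To pass from $\phi_n(\alpha_\pm) \neq 0$ to the strict inequalities $\alpha_+ < \eta_+(A_n)$ and $\eta_-(A_n) < \alpha_-$, I would count eigenvalues using Lemma~\ref{threshmult} and the result of \cite{jtt0}. By Lemma~\ref{threshmult}, for $A_n$ the multiplicities of $0$ and $-1$ are exactly $1$ and $0$ (for $n$ odd) or $0$ and $1$ (for $n$ even). By \cite{jtt0} there are no eigenvalues in $(-1,0)$. Thus $\eta_+(A_n)$ is the smallest eigenvalue of $A_n$ strictly exceeding $0$, and $\eta_-(A_n)$ is the largest eigenvalue strictly below $-1$. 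Reading off the sign of $\phi_n$ at $0$ and $-1$ from these multiplicities, and combining it with the sign of $\phi_n(\alpha_\pm)$, the intermediate value theorem forces any eigenvalue in $(0,\alpha_+]$ or $[\alpha_-,-1)$ to create an extra sign change, contradicting the counted multiplicities. This yields the strict bounds.

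\textbf{Main obstacle.} The delicate point is establishing that $\phi_n(\alpha_\pm)$ is nonzero with a uniform sign for all $n$. The simplification $4\alpha_\pm^2 + 4\alpha_\pm - 1 = 0$ should make the recursion manageable, but identifying the correct inductive invariant (for instance, that $\phi_n(\alpha_+)$ and $\phi_{n-2}(\alpha_+)$ have opposite signs of a predictable magnitude) is the step most likely to require care; verifying enough base cases, and checking parity separately for $n$ even versus $n$ odd (since $A_n$'s structure differs in those two cases), should close the argument.
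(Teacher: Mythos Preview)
The paper does not prove this lemma at all; it is quoted from \cite{alps} and used as a black box in the proof of Theorem~\ref{main}. There is therefore no argument in the present paper to compare your proposal against.

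Your strategy---a characteristic-polynomial recursion for $\phi_n$ evaluated at $\alpha_\pm$, exploiting $\alpha_\pm(\alpha_\pm+1)=1/4$, followed by a sign/multiplicity count---is a reasonable independent line of attack and is in spirit close to how \cite{alps} treats the anti-regular case. Two cautions, though. First, your description of how $A_n$ is obtained from $A_{n-2}$ (``a new coduplicate of a universal-type vertex and a new duplicate of a degree-one vertex'') is not literally correct: once the first new vertex is added, the second is no longer an exact duplicate/coduplicate of any old vertex, so the off-the-shelf duplication formulas for $\phi$ do not apply directly. You would need to derive the recursion more carefully, for instance via the $2h\times 2h$ quotient matrix of the NSG partition in Remark~\ref{thrstruc}, which for $A_n$ is tridiagonal-like and does yield a clean three-term recursion. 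Second, the final step requires more than $\phi_n(\alpha_\pm)\ne 0$: you need the \emph{sign} of $\phi_n$ just to the right of $0$ to agree with its sign at $\alpha_+$ (and similarly on the negative side), and even-multiplicity roots are not excluded by the intermediate value theorem alone. Your plan flags this as the ``main obstacle'' but does not yet supply the invariant; as written, the proposal is a plausible sketch rather than a proof.
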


We are now in a position to prove the main result of the paper.

\begin{thm}\label{main}
Other than the trivial eigenvalues $-1,0$, the interval $\left[(-1-\sqrt2)/2,\, (-1+\sqrt2)/2\right]$ does not contain an eigenvalue of any threshold graph.
\end{thm}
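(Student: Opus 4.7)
The plan is to prove the theorem by strong induction on $n=|V(G)|$. Without loss of generality $G$ is connected, since isolated vertices of a threshold graph only inflate $\mul(0,G)$ and do not produce any non-trivial spectrum. For the base case I would take $G$ to be anti-regular: Lemma~\ref{eta} pushes $\eta_-(G)$ below $(-1-\sqrt2)/2$ and $\eta_+(G)$ above $(-1+\sqrt2)/2$, so every non-trivial eigenvalue of $G$ avoids $I:=[(-1-\sqrt2)/2,\,(-1+\sqrt2)/2]$.

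For the inductive step let $G$ be a connected threshold graph on $n$ vertices that is not anti-regular. Lemma~\ref{G-v} then yields a vertex $v$ such that $H=G-v$ is a threshold graph (still connected, by inspection of the proof of that lemma) satisfying
$$\mul(0,H)+\mul(-1,H)=\mul(0,G)+\mul(-1,G)-1.$$
Applying the induction hypothesis to $H$, every eigenvalue of $H$ in $I$ is trivial, so $H$ has exactly $N_H:=\mul(0,H)+\mul(-1,H)$ eigenvalues in $I$.

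To close the argument I would run a pair of opposing counts. Since $-1$ and $0$ both lie strictly in the interior of $I$, Lemma~\ref{threshmult} forces $G$ to carry at least $\mul(0,G)+\mul(-1,G)=N_H+1$ eigenvalues in $I$ from the trivial spectrum alone. Interlacing (Lemma~\ref{inter}) supplies the matching upper bound $N_G\le N_H+1$: if $\lambda_q\ge\cdots\ge\lambda_{q+N_G-1}$ are the $N_G$ eigenvalues of $G$ in $I=[a,b]$, then $\lambda_i\ge\mu_i\ge\lambda_{i+1}$ squeezes $\mu_q,\ldots,\mu_{q+N_G-2}$ into $I$, so $N_H\ge N_G-1$. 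These two bounds sandwich $N_G$ at $N_H+1$, and the only way to realize that count is for every eigenvalue of $G$ in $I$ to be trivial. The main obstacle is really just this $\pm 1$ interlacing step together with the check that Lemma~\ref{G-v} drops the total trivial multiplicity by exactly one---after these are in hand the proof is pure bookkeeping with Lemmas~\ref{threshmult}, \ref{G-v}, and \ref{eta}.
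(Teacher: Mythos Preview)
Your argument is correct and follows essentially the same route as the paper: induction on $n$, with anti-regular graphs handled directly via Lemma~\ref{eta}, and the non-anti-regular case reduced to $H=G-v$ using Lemma~\ref{G-v} together with interlacing. The only cosmetic difference is that the paper tracks the positions of the eigenvalues $\la_{t-\ell},\ldots,\la_{t+j}$ explicitly and splits into the two cases (i)/(ii) of Lemma~\ref{G-v}, whereas you package the same interlacing step as a single $\pm1$ count on $N_G$ versus $N_H$; your version is a mild streamlining, not a different idea.
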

\begin{proof}{Let $G$ be a threshold graph of order $n$. We proceed by induction on $n$. With no loss of generality, we may assume that $G$ is connected.
The assertion holds if $n\le3$, so we assume $n\ge4$.
Recall that threshold graphs have no eigenvalues in the interval $(-1,~0)$.
 If $G$ is an anti-regular graph, then we are done by Lemma~\ref{eta}.
 So assume that $G$ is not an anti-regular graph.
 Let $v$ and $H=G-v$ be as given in Lemma~\ref{G-v}.
Let $\la_1\ge\cdots\ge\la_n$ and $\mu_1\ge\cdots\ge\mu_{n-1}$ be the eigenvalues of $G$ and $H$, respectively.
We may suppose that for some $t$,
\begin{equation}\label{mu}
\mu_{t-\ell-1}>\mu_{t-\ell}=\cdots=\mu_{t-1}=0>\mu_t=\cdots=\mu_{t+j-1}=-1>\mu_{t+j}.
\end{equation}
It is possible that either $\ell=0$ (i.e. $H$ has no $0$ eigenvalue) or $j=0$ (i.e. $H$ has no $-1$ eigenvalue) but we have $j+\ell\ge1$.
By the induction hypothesis,
$$\mu_{t-\ell-1}=\eta_+(H)>\frac{-1+\sqrt2}{2}~~\hbox{and}~~\mu_{t+j}=\eta_-(H)<\frac{-1-\sqrt2}{2}.$$
By interlacing, from \eqref{mu} we have
\begin{align*}
\la_{t-\ell}\ge\la_{t-\ell+1}=\cdots=\la_{t-1}=0\ge&\,\la_t\ge\la_{t+1}=\cdots=\la_{t+j-1}=-1\ge\la_{t+j},\\
\la_{t-\ell-1}\ge\mu_{t-\ell-1}&~~\hbox{and}~~\mu_{t+j}\ge\la_{t+j+1}.
\end{align*}
As $G$ is not an anti-regular graph, the case (i) or (ii) of Lemma~\ref{G-v} occurs.
If the case (i) occurs, then   $\mul(0,G)=\ell+1=\mul(0,H)+1$, $\mul(-1,G)=j=\mul(-1,H)$. This is only possible if
$\la_{t-\ell}=\la_t=0$ and $\la_{t+j}=-1$ if $j\ge1$.
If the case (ii) occurs, then   $\mul(0,G)=\ell=\mul(0,H)$, $\mul(-1,G)=j+1=\mul(-1,H)+1$ which implies that
 $\la_t=\la_{t+j}=-1$ and $\la_{t-\ell}=0$ if $\ell\ge1$. So in any case, $\la_{t-\ell},\ldots,\la_{t+j}\in\{-1,0\}$.
It turns out that
\begin{align*}
\eta_+(G)&=\la_{t-\ell-1}\ge\mu_{t-\ell-1}=\eta_+(H)>\frac{-1+\sqrt2}{2},\\
\eta_-(G)&=\la_{t+j+1}\le\mu_{t+j}=\eta_-(H)<\frac{-1-\sqrt2}{2}.
\end{align*}
The result now follows.
}\end{proof}

\section*{Acknowledgments}
I would like to thank Cesar Aguilar for pointing out an error in an earlier version of the paper.
The research of the author was in part supported by a grant from IPM (No. 98050211).

\end{document}